\documentclass[12pt,a4paper]{amsart}
\usepackage{amsmath,amssymb, amsbsy}
\usepackage{color,psfrag}
\usepackage[dvips]{graphicx}
\usepackage{enumerate}
\textheight237mm \textwidth165mm \topmargin-10mm \hoffset-19mm

\usepackage{stackrel}
\usepackage[bookmarks=false]{hyperref}

\usepackage{mathtools}
\mathtoolsset{showonlyrefs}

\begin{document}
\newcommand{\dyle}{\displaystyle}
\newcommand{\R}{{\mathbb{R}}}
 \newcommand{\Hi}{{\mathbb H}}
\newcommand{\Ss}{{\mathbb S}}
\newcommand{\N}{{\mathbb N}}
\newcommand{\Rn}{{\mathbb{R}^n}}
\newcommand{\ieq}{\begin{equation}}
\newcommand{\eeq}{\end{equation}}
\newcommand{\ieqa}{\begin{eqnarray}}
\newcommand{\eeqa}{\end{eqnarray}}
\newcommand{\ieqas}{\begin{eqnarray*}}
\newcommand{\eeqas}{\end{eqnarray*}}
\newcommand{\Bo}{\put(260,0){\rule{2mm}{2mm}}\\}


\theoremstyle{plain}
\newtheorem{theorem}{Theorem} [section]
\newtheorem{corollary}[theorem]{Corollary}
\newtheorem{lemma}[theorem]{Lemma}
\newtheorem{proposition}[theorem]{Proposition}
\def\neweq#1{\begin{equation}\label{#1}}
\def\endeq{\end{equation}}
\def\eq#1{(\ref{#1})}


\theoremstyle{definition}
\newtheorem{definition}[theorem]{Definition}
\newtheorem{remark}[theorem]{Remark}

\numberwithin{figure}{section}
\newcommand{\res}{\mathop{\hbox{\vrule height 7pt width .5pt depth
0pt \vrule height .5pt width 6pt depth 0pt}}\nolimits}
\def\at#1{{\bf #1}: } \def\att#1#2{{\bf #1}, {\bf #2}: }
\def\attt#1#2#3{{\bf #1}, {\bf #2}, {\bf #3}: } \def\atttt#1#2#3#4{{\bf #1}, {\bf #2}, {\bf #3},{\bf #4}: }
\def\aug#1#2{\frac{\displaystyle #1}{\displaystyle #2}} \def\figura#1#2{ \begin{figure}[ht] \vspace{#1} \caption{#2}
\end{figure}} \def\B#1{\bibitem{#1}} \def\q{\int_{\Omega^\sharp}}
\def\z{\int_{B_{\bar{\rho}}}\underline{\nu}\nabla (w+K_{c})\cdot
\nabla h} \def\a{\int_{B_{\bar{\rho}}}}
\def\b{\cdot\aug{x}{\|x\|}}
\def\n{\underline{\nu}} \def\d{\int_{B_{r}}}
\def\e{\int_{B_{\rho_{j}}}} \def\LL{{\mathcal L}}
\def\itr{\mathrm{Int}\,}
\def\D{{\mathcal D}}
 \def\tg{\tilde{g}}
\def\A{{\mathcal A}}
\def\S{{\mathcal S}}
\def\H{{\mathcal H}}
\def\M{{\mathcal M}}
\def\T{{\mathcal T}}
\def\U{{\mathcal U}}
\def\I{{\mathcal I}}
\def\F{{\mathcal F}}
\def\J{{\mathcal J}}
\def\E{{\mathcal E}}
\def\F{{\mathcal F}}
\def\G{{\mathcal G}}
\def\HH{{\mathcal H}}
\def\W{{\mathcal W}}
\def\H{\D^{2*}_{X}}
\def\d{d^X_M }
\def\LL{{\mathcal L}}
\def\H{{\mathcal H}}
\def\HH{{\mathcal H}}
\def\itr{\mathrm{Int}\,}
\def\vah{\mbox{var}_\Hi}
\def\vahh{\mbox{var}_\Hi^1}
\def\vax{\mbox{var}_X^1}
\def\va{\mbox{var}}
\def\intp{\int_0^\pi}
\def\SS{{\mathcal S}}
\def\Y{{\mathcal Y}}
\def\length{{l_\Hi}}
\newcommand{\average}{{\mathchoice {\kern1ex\vcenter{\hrule
height.4pt width 6pt depth0pt} \kern-11pt} {\kern1ex\vcenter{\hrule height.4pt width 4.3pt depth0pt} \kern-7pt} {} {} }}
\def\weak{\rightharpoonup}
\def\det{{\rm det}}
\newcommand{\ave}{\average\int}

\title[Honest vs insider trading]{Chances for the honest\\
in honest versus insider trading}

\author[M. Elizalde, C. Escudero]{Mauricio Elizalde, Carlos Escudero}
\address{}
\email{}

\keywords{Insider trading, anticipating calculus, forward integral, portfolio optimization.
\\ \indent 2010 {\it MSC: 60H05; 60H07; 60H10; 60H30; 91G10.}}

\date{\today}

\begin{abstract}
We study a Black-Scholes market with a finite time horizon and two investors: an honest and an insider trader.
We analyze it with anticipating stochastic calculus in two steps. First, we recover the classical result on portfolio
optimization that shows that the expected logarithmic utility of the insider is strictly greater than that of the honest trader.
Then, we prove that, whenever the market is viable, the honest trader can get a higher logarithmic utility, and therefore
more wealth, than the insider with a strictly positive probability. Our proof relies on the analysis of
a sort of forward integral variant of the Dol\'eans-Dade exponential process. The main financial conclusion is that the
logarithmic utility is perhaps too conservative for some insiders.
\end{abstract}
\maketitle

\section{Introduction}

The goal of this work is to compare the performances of two investors, one who is ignorant about the future, the ``honest trader'', and one who
possesses privileged information about it, the ``insider''. To be more precise, let us consider a filtered probability space
$(\Omega,\mathcal{F},\mathcal{F}_s,\mathbb{P})$ in which a Brownian motion $B_s$ is defined; moreover assume $\mathcal{F}_s=\sigma\{B_u, 0 \le u \le s\}$.
We furthermore assume that the wealth of the honest investor evolves according to the stochastic differential equation
\begin{equation}
d M_s = [(1-\pi_s) r_s M_s + \pi_s \mu_s M_s]ds + \pi_s \sigma_s M_s dB_s,
\label{merton}
\end{equation}
where $M_0$ (that is, $\left. M_s \right|_{s=0}$) is assumed to be a positive real number,
$r_s$ is the risk-free rate, $\mu_s$ the expected return of the stock market, $\sigma_s$ its volatility, and
$\pi_s$ is the time-dependent investor strategy or, in other words, her portfolio.
Of course, this means we are assuming a Black-Scholes market with two assets: one is risky and the other is riskless.
For the sake of simplicity,
we take $r_s$, $\mu_s$, and $\sigma_s$ to be deterministic continuous functions on $[0,T]$, with $\sigma_s$
strictly positive for all $s$. The time interval $[0,T]$, with $T>0$, is the fixed investing period, and $0 \le s \le t < T$.

If the portfolio $\pi_s \in L^2(\Omega \times [0,t])$ is $\mathcal{F}_s-$adapted (what ultimately guarantees that our first trader is \emph{honest})
then equation~\eqref{merton} admits
the unique solution~\cite{oksendal2013}
$$
M_t= M_0 \, \exp \left\{ \int_0^t \left[(1-\pi_s) r_s + \pi_s \mu_s - \frac12 \pi_s^2 \sigma_s^2 \right]ds + \int_0^t \pi_s \sigma_s dB_s \right\}.
$$
We furthermore assume that this trader is risk-averse, and in particular we implement this via the logarithmic utility
$U(\cdot)=\log(\cdot)$.
Therefore the optimal portfolio is given by the maximization of the quantity
$$
\mathbb{E}\{U[M_t/M_0]\}=\mathbb{E}\{\log[M_t/M_0]\},
$$
or, in other words, the expected utility of the normalized wealth.
The solution to this optimization problem is classical and reads
$\pi_s= (\mu_s - r_s)/\sigma_s^2$,
as shown by Merton in his celebrated work~\cite{merton1969}. In a certain sense, this solution can be regarded as
a consequence of the zero-mean property of the It\^o integral.

Things change sharply when the investor is an insider who possesses information about the future. The mathematical description of the insider
wealth we will employ relies on the use of the forward integral, introduced by Russo and Vallois in 1993, see~\cite{russo1993forward}.
In section~\ref{forward} we provide a very concise introduction to this integral. Then, in section~\ref{secmerton}, we review the optimization problem
for the insider portfolio by means of the use of the forward integral. This optimization is done under the same conditions as in the optimization
of the honest trader portfolio, that is, using the logarithmic utility. In such a case, one finds that the expected value of the insider utility
is strictly greater than the expected utility of the honest trader, as one could perhaps have guessed \emph{a priori}.
Our main result is present in section~\ref{hvsi}. There we prove that, although the insider always gets a larger utility on average, for some realizations
the honest trader can get more utility, and therefore more wealth. The proof is based on the analysis of
a kind of forward integral version of the Dol\'eans-Dade exponential process and the construction of an explicit lower bound
for the probability of the honest trader being more successful than the insider. The behavior of this lower bound is illustrated in section~\ref{examples}.
In particular, we show that, when conveniently scaled, the lower bound presents optimal investing periods $(0,T)$; we leave as an open question
whether or not this non-monotonic behavior is characteristic of the probability itself.
Finally, in section~\ref{conclusions}, we summarize the main financial consequences of our analysis.

\section{The forward integral with respect to Brownian motion}
\label{forward}

The forward integral was introduced by Russo and Vallois in 1993~\cite{russo1993forward}; see also~\cite{russo1995generalized} and~\cite{russo2000stochastic}. It allows to integrate anticipative integrands,
and in this sense it generalizes the one by It\^o; it is,
on the other hand, genuinely different from the one introduced before by Skorokhod~\cite{skorokhod1976},
see for instance~\cite{di2009malliavin}.

\begin{definition}
A stochastic process $\phi_t,\ t\in [0,T] $, is said to be forward integrable with respect to a standard Brownian motion $W_t$,
if there exists another stochastic process $I_t$ such that
\begin{equation}
\stackbin[0\leqslant t \leqslant T]{}{\sup} \ \Bigg\vert \int_0^t \phi_s \, \dfrac{W_{s+\epsilon}-W_s}{\epsilon}\ ds - I_t\Bigg\vert \rightarrow 0\ ,\ \ \epsilon\rightarrow 0^+
\label{Forward Integrals}
\end{equation}
in probability. If such a process exists, we denote
\begin{equation}
I_t:= \int_0^t \phi_s \, d^-W_s,\ t\in[0,T],
\label{d_}
\end{equation}
the forward integral of $\phi_t$ with respect to $W_t$ over $[0,T]$.
\end{definition}

If the process $\phi_t$ is It\^o integrable,
the forward integral coincides with the Itô integral of $\phi_t$ with respect to the Brownian motion.
A forward process with respect to the standard Brownian motion, is a stochastic process of the form
\begin{equation}
J_t = x+\int_0^t u_s \, ds + \int_0^t v_s \, d^-W_s,\ \ t \in [0,T],
\label{SDEforward}
\end{equation}
where $x$ is constant,
$$\int_0^T \vert u_s \vert \, ds<\infty ,\ \ \ a.s.,$$
and $v_s$ is a forward integrable stochastic process.
We present the It\^o formula for forward integrable stochastic processes in the following theorem, proven in~\cite{russo2000stochastic}.

\begin{theorem}
\textbf{The It\^o formula for forward integrals.}
Let
$$d^-J_s = u_s \, ds\ +\ v_s \, d^-W_s$$
be a forward process, a shorthand notation of \eqref{SDEforward}. Let $\mathit{g}(s,j)\in C^{1,2}([0,T]\times \mathbb{R})$ and define
$$Z_s :=\mathit{g}(s,J_s),\ \ s\in[0,T].$$
Then $Z_s,\ s\in[0,T],$ is a forward process and
$$d^-Z_s = \dfrac{\partial\mathit{g}}{\partial s}(s,J_s) \, ds\ +\ \dfrac{\partial\mathit{g}}{\partial j}(s,J_s) \, d^-J_s\ +\ \dfrac{1}{2} \, \dfrac{\partial^2\mathit{g}}{\partial j^2}(s,J_s) \, v^2_s \, ds.$$
\label{ItoForward}
\end{theorem}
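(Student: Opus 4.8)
The plan is to prove the forward It\^o formula by a direct computation based on the definition of the forward integral, reducing everything to the classical (deterministic/path-by-path) fundamental theorem of calculus applied to the regularized processes. First I would fix $\epsilon>0$ and introduce the regularized process $J_s^\epsilon := x + \int_0^s u_r\,dr + \int_0^s v_r\,\frac{W_{r+\epsilon}-W_r}{\epsilon}\,dr$, which is for each fixed realization an absolutely continuous function of $s$ whose derivative is $u_s + v_s\,\frac{W_{s+\epsilon}-W_s}{\epsilon}$. Since $g(s,\cdot)\in C^{1,2}$ and $J_s^\epsilon$ is $C^1$ in $s$, the classical chain rule gives, path by path,
\begin{equation}
g(t,J_t^\epsilon) - g(0,J_0^\epsilon) = \int_0^t \frac{\partial g}{\partial s}(s,J_s^\epsilon)\,ds + \int_0^t \frac{\partial g}{\partial j}(s,J_s^\epsilon)\left[u_s + v_s\,\frac{W_{s+\epsilon}-W_s}{\epsilon}\right]ds.
\end{equation}
Note that there is no second-order term here: the regularized paths are smooth in $s$, so the It\^o correction $\frac12 \frac{\partial^2 g}{\partial j^2} v_s^2$ must emerge in the limit $\epsilon\to 0^+$, and capturing it is precisely the crux of the argument.

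The second step is to pass to the limit $\epsilon\to 0^+$ in every term. By definition of the forward integral, $J_s^\epsilon \to J_s$ uniformly in $s$ in probability, so by continuity of $g$ and its derivatives the terms $g(t,J_t^\epsilon)$, $g(0,J_0^\epsilon)$, and $\int_0^t \frac{\partial g}{\partial s}(s,J_s^\epsilon)\,ds$ and $\int_0^t \frac{\partial g}{\partial j}(s,J_s^\epsilon)\,u_s\,ds$ converge to their unregularized counterparts (along a subsequence, using dominated convergence on a set of large probability, with the continuity of the coefficients $u,v$ and of $g$'s derivatives providing the needed local bounds). The delicate term is $\int_0^t \frac{\partial g}{\partial j}(s,J_s^\epsilon)\,v_s\,\frac{W_{s+\epsilon}-W_s}{\epsilon}\,ds$. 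Here I would split $\frac{\partial g}{\partial j}(s,J_s^\epsilon) = \frac{\partial g}{\partial j}(s,J_s) + \left[\frac{\partial g}{\partial j}(s,J_s^\epsilon)-\frac{\partial g}{\partial j}(s,J_s)\right]$; the first piece yields, in the limit, the forward integral $\int_0^t \frac{\partial g}{\partial j}(s,J_s)\,v_s\,d^-W_s$ by definition, while the second piece, after a Taylor expansion $\frac{\partial g}{\partial j}(s,J_s^\epsilon) - \frac{\partial g}{\partial j}(s,J_s) \approx \frac{\partial^2 g}{\partial j^2}(s,J_s)\,(J_s^\epsilon - J_s)$ and using that $J_s^\epsilon - J_s \approx \int_s^{\cdot}$-type increments of $v$ against $\frac{W_{\cdot+\epsilon}-W_\cdot}{\epsilon}$, produces in the limit the quadratic-variation term $\frac12 \int_0^t \frac{\partial^2 g}{\partial j^2}(s,J_s)\,v_s^2\,ds$.

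The main obstacle, as indicated, is making rigorous the emergence of the $\frac12\frac{\partial^2 g}{\partial j^2}v_s^2\,ds$ term from $\int_0^t\big[\frac{\partial g}{\partial j}(s,J_s^\epsilon)-\frac{\partial g}{\partial j}(s,J_s)\big] v_s \frac{W_{s+\epsilon}-W_s}{\epsilon}\,ds$. This is where one must control a double time integral of the form $\frac{1}{\epsilon^2}\int_0^t\int_{s-\epsilon}^s(\cdots)\,(W_{s+\epsilon}-W_s)\,dr\,ds$ and identify its limit as half the quadratic variation of the forward process; this is the technical heart carried out in~\cite{russo2000stochastic}, and it rests on the regularization characterization of quadratic variation (the $\epsilon$-approximation $\frac{1}{\epsilon}\int_0^t (W_{s+\epsilon}-W_s)^2\,ds \to t$) together with suitable uniform integrability. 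Since the statement cites~\cite{russo2000stochastic} for the proof, I would present the above as the structural argument and invoke the cited reference for the delicate convergence of the second-order term, remarking that the absence of a second-order contribution from $\frac{\partial g}{\partial s}$ and from the $u_s\,ds$ drift is immediate because those enter only through the absolutely continuous part of $J^\epsilon$.
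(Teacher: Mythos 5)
The paper does not actually prove this theorem: it states it and attributes the proof to \cite{russo2000stochastic}, so there is no in-paper argument to compare yours against, and your proposal, which also ultimately defers the hard convergence to that same reference, is acceptable as a citation-plus-sketch. That said, the sketch you give is not the argument the reference contains, and it has one logical wrinkle worth flagging. Russo and Vallois do not regularize $J$ into a $C^1$ process $J^\epsilon$ and apply the classical chain rule; they Taylor-expand the increment $g(s+\epsilon,J_{s+\epsilon})-g(s,J_s)$ to second order, integrate in $s$, divide by $\epsilon$, and let the left-hand side telescope to $g(t,J_t)-g(0,J_0)$. The second-order remainder then converges to $\tfrac12\int_0^t \partial_j^2 g(s,J_s)\,d[J]_s$ (with $[J]_t=\int_0^t v_s^2\,ds$) by the regularization characterization of quadratic variation, and the first-order term is \emph{forced} to converge because every other term does, its limit being the forward integral by definition. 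This ordering matters: forward integrability of $\partial_j g(s,J_s)v_s$ is part of the conclusion, not a hypothesis, so your assertion that ``the first piece yields the forward integral by definition'' presupposes the existence you are supposed to prove; you should instead establish convergence of the correction piece first and only then deduce convergence, and identify the limit, of the remaining term. Your route also carries an extra technical burden the telescoping argument avoids: you must quantify the rate at which $J^\epsilon-J\to 0$ in order to beat the $\epsilon^{-1/2}$ growth of the difference quotient $({W_{s+\epsilon}-W_s})/{\epsilon}$, whereas the telescoping proof never needs such a rate. Your heuristic for where the factor $\tfrac12$ comes from (the averaged increment over the window of length $\epsilon$) is correct, but as you acknowledge, turning it into a proof is precisely the content of the cited reference.
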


We use the forward integral in the next section to find the optimal portfolio process of an insider trader.

\section{Merton Portfolio Problem with Insider Information}
\label{secmerton}

If we let the portfolio $\pi_s$ to be built with anticipating information, then it is not adapted to the natural filtration; therefore It\^o calculus cannot be used~\cite{di2009malliavin}.
To face the problem, we use the forward integral, a more general approach than enlargement of filtrations~\cite{jeanblanc2009},
which has also been used to study insider trading~\cite{pikovsky1996anticipative},
see for instance~\cite{russo2007elements}.
For this reason it is not surprising that it has been frequently used to treat insider
information~\cite{biagini2005,di2009malliavin,draouil2015donsker,draouil2016optimal,draouil2016stochastic,ewald2011information,leon2003anticipating,
nualart2006malliavin}.
As noted in the previous section, the forward integral is genuinely different from the Skorokhod one,
and we recommend the former and not the latter that has to be used in the mathematical modeling
of insider trading in view of previous results~\cite{bastons2018triple,escudero2018,escudero2021optimal}.
Let us set the investing period as $[0,T]$, with a fixed $T>0$, and assume that the trader knows $B_T$.
This can be formalized as transforming equation~\eqref{merton} into
\begin{equation}\label{merton2}
d^- M_s = [(1-\pi_s) r_s M_s + \pi_s \mu_s M_s]ds + \pi_s \sigma_s M_s d^- B_s,
\end{equation}
where we use the notation $d^-$ for the forward integral,
according to the previous section. In this case the optimal portfolio $\pi_s \in L^2(\Omega \times [0,t])$
is assumed to be adapted to $\sigma\{B_u, 0 \le u \le s\} \vee \sigma\{B_T\}$
and forward integrable. This assures the uniqueness of the solution to equation~\eqref{merton2}~\cite{di2009malliavin},
and moreover the portfolio can be computed in closed form as in~\cite{oksendal2017} to find
$\pi_s = (\mu_s - r_s)/\sigma_s^2 + (B_T-B_s)/[\sigma_s(T-s)]$.
On one hand, the utility of the honest trader can be found using the classical It\^o formula and reads
\begin{eqnarray}\nonumber
\left. \mathbb{E}\{U[M_t/M_0]\}\right|_{\text{Honest}} &=& \int_0^t \left[(1-\pi_s) r_s + \pi_s \mu_s - \frac12 \pi_s^2 \sigma_s^2 \right]_{\pi_s=(\mu_s - r_s)/\sigma_s^2}ds
\\ \nonumber
&=& \int_0^t \left[\frac{\sigma_s^2 r_s -\mu_s r_s + r_s^2}{\sigma_s^2}
+ \frac{\mu_s^2 - r_s \mu_s}{\sigma_s^2}
- \frac12 \frac{\mu_s^2 +r_s^2 - 2 r_s \mu_s}{\sigma_s^2} \right]ds
\\ \nonumber
&=& \int_0^t \left[ r_s + \frac12 \frac{(\mu_s - r_s)^2}{\sigma_s^2} \right]ds.
\end{eqnarray}
On the other hand, the utility of the insider trader is derived by means of the It\^o formula for forward integrals (Theorem \ref{ItoForward}) and reads
\begin{eqnarray}\nonumber
\left. \mathbb{E}\{U[M_t/M_0]\} \right| _{\text{Insider}} &=& \mathbb{E} \left\{ \int_0^t \left[(1-\pi_s) r_s + \pi_s \mu_s - \frac12 \pi_s^2 \sigma_s^2
\right]_{\pi_s=(\mu_s - r_s)/\sigma_s^2+(B_T-B_s)/[\sigma_s(T-s)]}ds \right\}
\\ \nonumber
& & + \mathbb{E} \left\{ \int_0^t \left[ \pi_s \sigma_s \right]_{\pi_s=(\mu_s - r_s)/\sigma_s^2+(B_T-B_s)/[\sigma_s(T-s)]} d^-B_s \right\}
\\ \nonumber
&=& \mathbb{E} \left\{ \int_0^t \left[ r_s + \frac12 \frac{(\mu_s - r_s)^2}{\sigma_s^2} - \frac12 \frac{(B_T-B_s)^2}{(T-s)^2} \right]ds \right\}
\\ \nonumber
& & + \mathbb{E} \left\{ \int_0^t \left[ \frac{B_T-B_s}{T-s} \right]^2 ds \right\}
\\ \nonumber
&=& \mathbb{E} \left\{ \int_0^t \left[ r_s + \frac12 \frac{(\mu_s - r_s)^2}{\sigma_s^2} + \frac12 \frac{(B_T-B_s)^2}{(T-s)^2} \right]ds \right\}
\\ \nonumber
&=& \int_0^t \left[ r_s + \frac12 \frac{(\mu_s - r_s)^2}{\sigma_s^2} \right]ds + \frac12 \log \left( \frac{T}{T-t} \right),
\end{eqnarray}
as implied by
\begin{eqnarray}\nonumber
& & \mathbb{E} \left\{ \int_0^t \left[ \pi_s \sigma_s \right]_{\pi_s=(\mu_s - r_s)/\sigma_s^2+(B_T-B_s)/[\sigma_s(T-s)]} d^-B_s \right\}
\\ \nonumber
&=& \mathbb{E} \left\{ \int_0^t \left[ \frac{\mu_s - r_s}{\sigma_s} + \frac{B_T-B_s}{T-s} \right] d^-B_s \right\}
\\ \nonumber
&=& \mathbb{E} \left\{ \int_0^t \left[ \frac{\mu_s - r_s}{\sigma_s} \right] d B_s \right\}
+ \mathbb{E} \left\{ \int_0^t \left[\frac{B_T-B_s}{T-s} \right] d^-B_s \right\}
\\ \nonumber
&=& \mathbb{E} \left\{ \int_0^t \frac{1}{T-s}ds \right\} \\ \nonumber
&=& \mathbb{E} \left\{ \int_0^t \left[ \frac{B_T-B_s}{T-s} \right]^2 ds \right\},
\end{eqnarray}
where we have used the expectation of, respectively, the It\^o and the forward integrals~\cite{di2009malliavin}.
Therefore
\begin{eqnarray}\nonumber
\left. \mathbb{E}\{U[M_t/M_0]\} \right|_{\text{Insider}} &=& \left. \mathbb{E}\{U[M_t/M_0]\} \right|_{\text{Honest}}
+ \frac12 \log \left( \frac{T}{T-t} \right) \\ \nonumber
&>& \left. \mathbb{E}\{U[M_t/M_0]\} \right|_{\text{Honest}},
\end{eqnarray}
for $t >0$, a result which is financially meaningful. We also have
$$
\lim_{t \nearrow T} \left. \mathbb{E}\{U[M_t/M_0]\} \right|_{\text{Insider}} = \infty,
$$
therefore the market is not viable in this limit (in fact, this is our definition of viability: the finiteness of the expected utility).
This way we have recovered the classical results in this problem~\cite{pikovsky1996anticipative}.

\section{Honest vs insider trading sample by sample}
\label{hvsi}

The previous section shows that the insider always gets a higher expected utility than the honest trader. However, it is not clear
if the honest trader can still get a higher utility for some samples. In this section we show that this is indeed the case and,
moreover, it might happen with a positive probability for every $0 < t < T$.
Let us emphasize that this happens for the optimal portfolios derived from the maximization of the expected logarithmic utility, which are not influenced
by any notion of pathwise optimality; moreover, such a notion would require the introduction of a certain stochastic ordering
as briefly discussed in section~\ref{conclusions}.
From now on we will always consider, unless explicitly stated on the contrary, that $t \in (0,T)$.
Assuming the strategies of the previous section we find
\begin{eqnarray}\nonumber
\left. \frac{M_t}{M_0} \right|_{\text{Honest}} &=& \exp \left\{ \int_0^t \left[ r_s + \frac{(\mu_s - r_s)^2}{2\sigma_s^2} \right] ds +
\int_0^t \left[ \frac{\mu_s - r_s}{\sigma_s} \right] dB_s \right\} \\ \nonumber
\left. \frac{M_t}{M_0} \right|_{\text{Insider}} &=& \left. \frac{M_t}{M_0} \right|_{\text{Honest}} \times \\ \nonumber
&& \exp \left\{ \int_0^t \left[ \frac{B_T - B_s}{T-s} \right] d^-B_s - \frac12 \int_0^t \left[ \frac{(B_T - B_s)^2}{(T-s)^2} \right] ds \right\}.
\end{eqnarray}
This last line can be considered as a type of forward integral version of a Dol\'eans-Dade exponential process~\cite{jeanblanc2009}.
Now we state our main result, which follows from the analysis of this process.

\begin{theorem}
For every $0 < t < T$, there is a subset of the sample space with a positive measure such that,
for all samples in this subset, the honest trader gets more utility than the insider, i.~e.
$$
\mathbb{P} \left( \log \left. \dfrac{M_t}{M_0} \right|_{\text{Insider}} - \log \left. \dfrac{M_t}{M_0} \right|_{\text{Honest}} <0 \right) >0.
$$
\label{TheoremH}
\end{theorem}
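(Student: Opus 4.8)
The plan is to evaluate the exponent in closed form and then to reduce the statement to an elementary fact about an indefinite Gaussian quadratic form. Write
\[
\Delta_t := \log \left.\frac{M_t}{M_0}\right|_{\text{Insider}} - \log \left.\frac{M_t}{M_0}\right|_{\text{Honest}} = \int_0^t \frac{B_T-B_s}{T-s}\,d^-B_s - \frac12\int_0^t \frac{(B_T-B_s)^2}{(T-s)^2}\,ds ,
\]
which is precisely the logarithm of the forward Dol\'eans-Dade exponential displayed just before the theorem. The first step is to compute the forward integral. Fix a real parameter $c$ and apply the It\^o formula for forward integrals (Theorem~\ref{ItoForward}) to the (adapted, hence forward) process $J_s=B_s$ and to $g(s,j)=-\dfrac{(c-j)^2}{2(T-s)}\in C^{1,2}([0,t']\times\R)$ for any $t'\in(t,T)$ (here the smoothness uses $t<T$). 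Integrating the resulting identity over $[0,t]$ and using $B_0=0$ gives
\[
\int_0^t \frac{c-B_s}{T-s}\,dB_s = \frac{c^2}{2T} - \frac{(c-B_t)^2}{2(T-t)} + \frac12\int_0^t \frac{(c-B_s)^2}{(T-s)^2}\,ds + \frac12\log\frac{T}{T-t}.
\]
Since the right-hand side is, pathwise, a quadratic polynomial in $c$ with continuous random coefficients, one may substitute $c=B_T$; for the forward integral this substitution is legitimate, either by the substitution rule for forward integrals or directly from the defining limit~\eqref{Forward Integrals}. After the substitution the term $\tfrac12\int_0^t (B_T-B_s)^2/(T-s)^2\,ds$ cancels the corresponding term in $\Delta_t$, leaving the clean identity
\[
\Delta_t = \frac{B_T^2}{2T} - \frac{(B_T-B_t)^2}{2(T-t)} + \frac12\log\frac{T}{T-t}.
\]
(As a check, taking expectations here returns $\mathbb{E}\{\Delta_t\}=\tfrac12\log\frac{T}{T-t}$, consistent with section~\ref{secmerton}.)

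The second step is to decorrelate. Set $a:=B_t\sim\mathcal N(0,t)$ and $b:=B_T-B_t\sim\mathcal N(0,T-t)$; these are independent and $B_T=a+b$, so
\[
\Delta_t = \tfrac12\log\tfrac{T}{T-t} + q(a,b),\qquad q(a,b):=\frac{(a+b)^2}{2T}-\frac{b^2}{2(T-t)} .
\]
The form $q$ is indefinite: on the line $a=0$ one has $q(0,b)=-\dfrac{t}{2T(T-t)}\,b^2$, which tends to $-\infty$. Hence there exists a point $(a_0,b_0)$ with $q(a_0,b_0)<-\tfrac12\log\tfrac{T}{T-t}$, i.e.\ $\Delta_t<0$ there, and by continuity of $q$ the strict inequality $\Delta_t<0$ persists on an open neighbourhood $U$ of $(a_0,b_0)$. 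Because $(a,b)$ has a strictly positive Lebesgue density on $\R^2$ (the product of two nondegenerate Gaussian densities), $\mathbb{P}((a,b)\in U)>0$, which is exactly the assertion $\mathbb{P}(\Delta_t<0)>0$.

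For the explicit lower bound advertised in the introduction and analysed in section~\ref{examples}, I would not stop at an abstract open set but instead intersect $\{\Delta_t<0\}$ with a rectangle $\{|a|\le\delta\}\cap\{M\le b\le M'\}$ on which $q>-\tfrac12\log\tfrac{T}{T-t}$ holds throughout (such $\delta,M,M'$ exist by the same indefiniteness of $q$); by independence this yields a bound of product form $\mathbb{P}(|a|\le\delta)\,\mathbb{P}(M\le b\le M')$, fully explicit in terms of the standard normal distribution function and amenable to optimization and rescaling. The main obstacle in the whole argument is the single substitution $c=B_T$ in the evaluation of the forward integral: since $B_T$ is anticipating this step is outside the scope of It\^o calculus, and it is precisely where the choice of the forward (rather than the Skorokhod) integral does the work; everything after it is Gaussian bookkeeping.
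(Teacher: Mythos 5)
Your proposal is correct, and it splits naturally into two halves that compare differently with the paper. The first half --- the closed-form evaluation of the exponent --- lands on exactly the same identity as the paper, $\Delta_t=\tfrac12\bigl[B_T^2/T+\log(T/(T-t))-(B_T-B_t)^2/(T-t)\bigr]$, and by essentially the same means: the paper invokes integration by parts for the forward integral (a corollary of Theorem~\ref{ItoForward}) and likewise treats $B_T$ as a parameter inside the forward integral, so your substitution $c=B_T$ is no more and no less delicate than what the paper itself does; your packaging via $g(s,j)=-(c-j)^2/(2(T-s))$ is arguably cleaner since it produces the $\log(T/(T-t))$ term directly from the $\tfrac12\partial_{jj}g$ correction rather than from an explicit $ds$-integration. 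The second half is where you genuinely diverge. The paper converts $\{\Delta_t<0\}$ into a statement about $B_TB_t$, passes to polar coordinates, and grinds through triangle and Young inequalities to extract an explicit positive lower bound $\mathcal{L}_t$ for the probability; you instead observe that in the independent coordinates $(a,b)=(B_t,B_T-B_t)$ the relevant quadratic form is indefinite (it tends to $-\infty$ along $a=0$), so the event contains a nonempty open set, which has positive probability under a nondegenerate planar Gaussian. Your argument is shorter and proves the theorem as stated with no computation beyond the identity; what it does not deliver is the explicit, optimizable bound $\mathcal{L}_t$ on which Remark~\ref{remlob} and the whole of section~\ref{examples} depend --- though, as you note, intersecting with a rectangle and using independence would recover an explicit (if different) product-form bound with less effort than the paper's polar-coordinate route.
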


\begin{proof}
Taking the difference between the logarithms of the wealth amounts of the insider and the ordinary trader, that is, the difference between the utilities,
by the previous discussion we find:
$$
\begin{array}{ll}
\log \left. \dfrac{M_t}{M_0} \right|_{\text{Insider}} - \log \left. \dfrac{M_t}{M_0} \right|_{\text{Honest}} &= \displaystyle\int_0^t \frac{B_T - B_s}{T-s} d^- B_s - \dfrac{1}{2} \displaystyle\int_0^t \frac{(B_T - B_s)^2}{(T-s)^2} ds
\\\\
&= \dfrac{B_T B_t - \frac{1}{2} (B_t^2 - t)}{T-t} - \displaystyle\int_0^t \dfrac{B_T B_s - \frac{1}{2}(B_s^2 - s)}{(T-s)^2} ds
\\\\
&\ \ \ - \displaystyle\int_0^t \dfrac{\frac{1}{2}(B_T^2 - 2B_T B_s + B_s^2)}{(T-s)^2} ds
\\\\
&= \dfrac{t + 2 B_T B_t - B_t^2}{2(T-t)} - \displaystyle\int_0^t \dfrac{B_T^2 + s}{2(T-s)^2} ds
\\\\
&= \dfrac{t + 2 B_T B_t - B_t^2}{2(T-t)} - \dfrac{1}{2} \left[ \dfrac{B_T^2}{T-s} + \dfrac{s}{T-s} + \log (T-s) \right]\Bigg\vert_0^t
\\\\
&= \dfrac{1}{2} \left[ \dfrac{-B_t^2 + 2 B_T B_t + t - B_T^2}{T-t}  + \dfrac{B_T^2}{T} - \dfrac{t}{T-t} - \log \left( \dfrac{T-t}{T} \right)\right]
\\\\
&= \dfrac{1}{2} \left[ \dfrac{B_T^2}{T} + \log \left( \dfrac{T}{T-t} \right) - \dfrac{(B_T - B_t)^2}{T-t} \right],
\end{array}
$$
where we have used the integration by parts formula for the forward integral,
which is a particular case of the It\^o lemma for this integral, see Theorem~\ref{ItoForward}.
Now, let us define
$$
H_t:= \log \left. \dfrac{M_t}{M_0} \right|_{\text{Insider}} - \log \left. \dfrac{M_t}{M_0} \right|_{\text{Honest}}.
$$
To find the probability with which the honest trader gets more wealth (and therefore more utility) than the insider we have to compute
$$
\begin{array}{ll}
\mathbb{P} (H_t <0)
&= \mathbb{P} \left(
\log \left. \dfrac{M_t}{M_0} \right|_{\text{Insider}} - \log \left. \dfrac{M_t}{M_0} \right|_{\text{Honest}} <0
 \right)
 \\\\
&= \mathbb{P} \left( \dfrac{B^2_T}{T} + \log \left( \dfrac{T}{T-t} \right) - \dfrac{(B_T - B_t)^2}{T-t} <0 \right)
\\\\
&=  \mathbb{P} \left( \dfrac{B^2_t}{T} + \dfrac{2B_t}{T}(B_T - B_t) - \dfrac{t (B_T - B_t)^2}{T (T-t)} \right.
 + \log \left( \left. \dfrac{T}{T-t} \right) <0 \right).
\end{array}
$$
We can write this equivalently as
$$
\begin{array}{ll}
\mathbb{P}(H_t <0) &= \mathbb{P} \left( \dfrac{2B_T B_t}{T-t} < - \log \left( \dfrac{T}{T-t} \right) + \dfrac{B_t^2}{T-t} + \dfrac{B_T^2}{T-t} - \dfrac{B_T^2}{T} \right)
\\\\
& \geqslant \mathbb{P} \left( B_T B_t < - \left( \dfrac{T-t}{2} \right) \log \left( \dfrac{T}{T-t} \right) \right).
\end{array}
$$
By using the fact that $B_T B_t = \left( B_T - B_t \right) B_t + B_t^2$, we can decompose this lower bound into
\begin{equation}
\begin{array}{ll}
\mathbb{P}(H_t <0) &\geqslant
\mathbb{P} \left[ \left( B_T - B_t \right) + B_t < - \left( \dfrac{T-t}{2} \right) \log \left( \dfrac{T}{T-t} \right),\ B_t > 1 \right]
\\\\
& \ \ \ + \ \mathbb{P} \left[ \left( B_T - B_t \right) + B_t > \left( \dfrac{T-t}{2} \right) \log \left( \dfrac{T}{T-t} \right),\ B_t < - 1 \right]
\\\\
&= 2 \ \mathbb{P} \left[ \left( B_T - B_t \right) + B_t < - \left( \dfrac{T-t}{2} \right) \log \left( \dfrac{T}{T-t} \right),\ B_t > 1 \right].
\end{array}
\label{Decompose}
\end{equation}
Now, for the sake of brevity, let us introduce, for fixed $t$, the random variables
\begin{equation}
\begin{array}{lll}
X_t & := B_T - B_t + L_t + 1 \sim N(L_t + 1,\ T-t),\\\\
Y_t & := B_t - 1 \sim N(-1,\ t),
\label{NormalTransf2}
\end{array}
\end{equation}
where
\begin{equation}
L_t := \dfrac{T-t}{2} \log \left( \dfrac{T}{T-t} \right),
\label{L_t}
\end{equation}
so they are clearly independent.
Then we can rewrite~\eqref{Decompose} as
\begin{equation}
\begin{array}{ll}
\mathbb{P}(H_t <0) \geqslant
2 \ \mathbb{P} \left( X_t + Y_t < 0 \ , \ Y_t > 0 \right).
\end{array}
\label{ProbRight}
\end{equation}
This last probability can be expressed as the double integral
\begin{equation}
\mathbb{P} \left( X_t + Y_t < 0 \ , \ Y_t > 0 \right)=
\int_0^{\infty} \int_{\frac{3}{4}\pi}^{\pi}\  \dfrac{r}{2 \pi \sqrt{t(T-t)}} \exp \left\lbrace - \ \dfrac{1}{2} h(r,\theta,t) \right\rbrace d\theta d r,
\label{PolarIntegral2}
\end{equation}
where
$$
h(r, \theta, t) = \dfrac{\left( r \cos \theta - L_t - 1 \right)^2}{T-t} + \dfrac{\left( r \sin \theta + 1 \right)^2}{t}.
$$
To lower bound~\eqref{PolarIntegral2} we use the triangle inequalities
\begin{equation}
\begin{array}{ll}
\ \ \left| r \cos \theta - \left( L_t + 1 \right) \right| &\leqslant \left| r \cos \theta \right| + \left| L_t + 1 \right|
\\\\
&\leqslant r + L_t + 1,
\\\\
\ \ \ \ \ \ \ \ \ \ \ \ \left| r \sin \theta + 1 \right| &\leqslant \left| r \sin \theta \right| + \left| 1 \right|
\\\\
&\leqslant \dfrac{r}{\sqrt{2}} + 1,
\end{array}
\label{triangle}
\end{equation}
to, in turn, upper bound
$$
h(r, \theta, t) \le 2 (a_t r^2 + b_t r + c_t),
$$
where
\begin{equation}
\begin{array}{ll}
a_t &:= \dfrac{t + T}{2t(T - t)},
\\\\
b_t &:= \dfrac{2t(L_t + 1) + \sqrt{2}(T - t)}{t(T - t)},
\\\\
c_t &:= \dfrac{t(L_t + 1)^2 + (T - t)}{t(T - t)};
\end{array}
\label{abc}
\end{equation}
so, in particular, this bound is uniform in $\theta$.
Now, by Young inequality
$b_t r \leqslant b_t/2 + b_t r^2/2$,
we find
\begin{equation}
a_t r^2 + b_t r + c_t \leqslant \left( a_t + \dfrac{1}{2} b_t \right) r^2 + \left( \dfrac{1}{2} b_t + c_t \right).
\end{equation}
By means of these inequalities we get
\begin{equation}
\begin{array}{ll}
\mathbb{P} \left( X_t + Y_t < 0 \ , \ Y_t > 0 \right) &\ge
\displaystyle\int_0^\infty \int_{\frac{3}{4}\pi}^\pi \dfrac{r}{2\pi\sqrt{t(T-t)}} \exp \left\lbrace - a_t r^2 - b_t r - c_t \right\rbrace d \theta dr
\\\\
&\ge \displaystyle\int_0^\infty \dfrac{r}{8 \sqrt{t(T-t)}} \exp \left\lbrace - \left( a_t + \dfrac{1}{2} b_t \right) r^2 - \left( \dfrac{1}{2} b_t + c_t \right) \right\rbrace dr
\\\\
&= \dfrac{\exp \left\lbrace - \left( \frac{1}{2} b_t + c_t \right)\right\rbrace}{16 \left( a_t + \frac{1}{2} b_t \right) \sqrt{t(T-t)}}\ .
\end{array}
\end{equation}
So, we conclude
\begin{equation}
\begin{array}{ll}
\mathbb{P}(H_t <0) \ge &
\dfrac{\sqrt{t(T-t)}}{4 \left( 3t + T + 2t L_t + \sqrt{2} (T-t)\right)}
\\\\
&\times \exp \left\lbrace - \ \dfrac{t(L_t + 1)(L_t + 2) + \left( \frac{1 + \sqrt{2}}{\sqrt{2}} \right) (T-t)}{t(T-t)} \right\rbrace,
\end{array}
\end{equation}
which is a positive quantity for all $t \in (0,T)$.
\end{proof}

\begin{remark}\label{remlob}
If we define
\begin{equation}
\begin{array}{ll}
\mathcal{L}_t := &
\dfrac{\sqrt{t(T-t)}}{4 \left( 3t + T + 2t L_t + \sqrt{2} (T-t)\right)}
\\\\
&\times \exp \left\lbrace - \ \dfrac{t(L_t + 1)(L_t + 2) + \left( \frac{1 + \sqrt{2}}{\sqrt{2}} \right) (T-t)}{t(T-t)} \right\rbrace,
\end{array}
\end{equation}
then we observe
\begin{equation}\nonumber
\lim_{t \searrow 0} \mathcal{L}_t = 0, \qquad \lim_{t \nearrow T} \mathcal{L}_t = 0.
\end{equation}
In particular, when t is approaching the horizon time, the insider is closer to the moment when she has full information;
note that in this moment the market is no longer viable.
Note also that $H_0=0$ almost surely.
\end{remark}

\section{Behavior of the lower bound}
\label{examples}

To illustrate the behavior of the lower bound, we have plotted $\LL_t$ versus $t \in (0,T)$, for $T=1,10,50$, and 100, in Figure~\ref{Fig.LowerBound4}.
We see that it grows until it finds a maximum and then it decreases monotonically.
We have conveniently scaled the plots for ease of visualization.
\begin{figure}[h]
\centering
\includegraphics[scale=1]{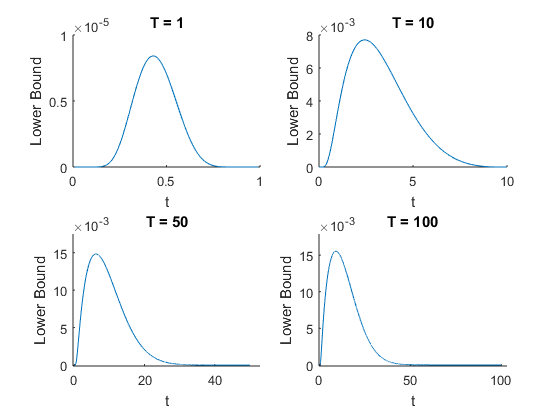}
\caption{$\LL_t$ versus $t \in (0,T)$, for different values of $T$.}
\label{Fig.LowerBound4}
\end{figure}
The illustrations in Figure~\ref{Fig.LowerBound4} show the existence of an optimal $t$, something that could be expected from Remark~\ref{remlob}.
More surprising is, perhaps, the existence of an optimal $T$ for a conveniently scaled time.
To check that, let $t = f T$ for a fixed $f \in (0,1)$; then
\begin{equation}
\begin{array}{ll}
\LL_{fT} = &
\dfrac{\sqrt{f(1-f)}}{4 \left( 3f + 1 + 2f L T + \sqrt{2} (1-f)\right)}
\\\\
&\times \exp \left\lbrace - \ \dfrac{f(L T + 1)(L T + 2) + \left( \frac{1 + \sqrt{2}}{\sqrt{2}} \right) (1-f)}{f(1-f)T} \right\rbrace,
\end{array}
\end{equation}
and
\begin{equation}\nonumber
\lim_{T \searrow 0} \LL_{fT} = 0, \qquad \lim_{T \nearrow \infty} \LL_{fT} = 0,
\end{equation}
where
$$
L := \dfrac{1-f}{2} \log \left( \dfrac{1}{1-f} \right).
$$
For the sake of visualization, we show the behavior of $\LL_{T/n}$, $n\in \N$, for different values of $T$ and $n=2,4,8,16$, in Figure~\ref{Fig.Thalf}. Observe that this is nothing but setting $f=1/n$.
\begin{figure}[h]
\centering
\includegraphics[scale=1]{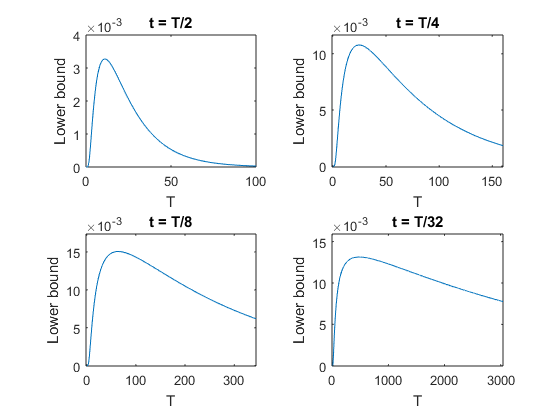}
\caption{$\LL_{T/n}$ versus $T$, for different values of $n$.}
\label{Fig.Thalf}
\end{figure}
In all the cases represented in Figure~\ref{Fig.Thalf},
the lower bound $\LL_{fT}$ has a maximum in $(0,\infty)$. To prove this is in fact a general behavior,
we take the derivative of the lower bound with respect to $T$:
$$
\dfrac{\partial \LL_{fT}}{\partial T} = \LL_{fT} \left[ - \dfrac{fL^2}{f(1-f)} + \dfrac{2f + \frac{1+\sqrt{2}}{\sqrt{2}}(1-f)}{f(1-f)T^2} - \dfrac{2fL}{(3f+1+\sqrt{2}(1-f)+2fLT)} \right].
$$
Observe that $\dfrac{\partial \LL_{fT}}{\partial T} \Big / {\LL_{fT}}$ gives us the critical points of the lower bound since $\LL_{fT}$ is positive;
moreover, all the maxima must be critical points given its smoothness and boundary behavior.
If we equate this expression to zero, we arrive at the algebraic equation
$$
\dfrac{\partial \LL_{fT}}{\partial T} \Big / {\LL_{fT}} = A_3 T^3 + A_2 T^2 + A_1 T + A_0 = 0,
$$
where
$$
\begin{array}{ll}
&A_3 = - 2 f^2 L^3 < 0,
\\\\
&A_2 = - \left( \left( 3f + 1 + \sqrt{2} (1 - f) \right) fL^2 + 2f (1-f)L \right) < 0,
\\\\
&A_1 = 2fL \left( 2f + \frac{1+\sqrt{2}}{\sqrt{2}} (1-f) \right) > 0,
\\\\
&A_0 = \left( 2f + \frac{1+\sqrt{2}}{\sqrt{2}} (1-f) \right) \left( 3f + 1 + \sqrt{2} (1 - f) \right) > 0.
\end{array}
$$
By the Descartes rule of signs, we conclude that this polynomial has exactly one root for $T>0$.
Therefore $\LL_{fT}$ has exactly one maximum. Of course, the value of $T$ that corresponds to this maximum
is explicitly computable, as any cubic equation is solvable in terms of radicals. However, the resulting
expression is cumbersome, and in consequence we do not reproduce it here.
Of course, this is just the behavior of the lower bound we have explicitly computed in the previous section.
We leave as an open question whether or not there exists such an optimal time interval for the victory of the honest
over the insider trader.

\section{Conclusions}
\label{conclusions}

In this work we have shown that, under the logarithmic utility, although the insider gets more utility on average, the honest trader still gets
more utility (and more wealth) for some samples.
The result is somehow counterintuitive, at least from the financial viewpoint, since the insider is assumed to possess strictly more information than the
honest investor (although mathematically it is clear that dominance in mean does not imply almost sure dominance). Moreover, it contrasts with previous results that studied a simplified version of this problem.
In~\cite{escudero2018,escudero2021optimal},
in which only buy-and-hold strategies with no shorting allowed were permitted, but risk aversion was replaced by risk
neutrality (for both insider and honest traders), it was shown that the insider gets more wealth (and therefore more utility) almost surely;
something more intuitive under the current assumptions.
The positive, albeit possibly small, probability of the honest trader victory
over the insider seems to be related to the logarithmic utility.
Let us try to discuss now how this may happen in simple terms.
The paths responsible of this phenomenon are those for which
$(B_T-B_t)^2/(T-t) > B_T^2/T + \log[T/(T-t)]$,
as follows from the proof of Theorem~\ref{TheoremH}. The right hand side expresses the increasing advantage of the insider when $|B_T|$ is further away
from the origin, which is the expected, and most likely, value of Brownian motion.
While the left hand side illustrates the risk aversion felt by the insider
when the intermediate value of Brownian motion has to be severely corrected in order to get the final value, which is known to the insider.
Since the honest trader is ignorant about the severe correction that the Brownian path will experience, this cannot influence her feelings.
But since the insider knows the magnitude and direction of the correction to the Brownian path,
it is doubtful that this should influence her aversion to risk much.
This behavior is presumably too conservative for an insider, at least in those cases in
which her privileged information includes all of the information managed by the ordinary trader. Perhaps in such a case the average insider
is not that risk-averse as described by a logarithmic utility.
Our present result complements a large number of works that assume this utility function. It is possible that such
an assumption underestimates the role of certain insiders in real financial markets.

It remains open to find an alternative formulation that fixes this problem. The most appealing utilities in view of our results would be,
{\it a priori}, risk-seeking power laws or even an exponential utility. However, all of these are problematic.
To see this fix $\alpha >0$ and compute the $\alpha-$th moment of the forward integral version of the Dol\'eans-Dade
exponential process from section~\ref{hvsi}:
\begin{eqnarray}\nonumber
&& \mathbb{E} \left[ \left( \left. \frac{M_t}{M_0} \right|_{\text{Insider}} \bigg/
\left. \frac{M_t}{M_0} \right|_{\text{Honest}} \right)^\alpha \, \right]
\\ \nonumber
&=& \mathbb{E} \left( \exp \left\{ \alpha \int_0^t \left[ \frac{B_T - B_s}{T-s} \right] d^-B_s
- \frac{\alpha}{2} \int_0^t \left[ \frac{(B_T - B_s)^2}{(T-s)^2} \right] ds \right\} \right)
\\ \nonumber &=&
\mathbb{E} \left( \exp \left\{ \dfrac{\alpha}{2}\left[ \dfrac{B^2_t}{T} + \dfrac{2B_t}{T}(B_T - B_t) - \dfrac{t (B_T - B_t)^2}{T (T-t)}
+ \log \left( \dfrac{T}{T-t} \right) \right] \right\} \right)
\\ \nonumber &=&
\left( \dfrac{T}{T + \alpha t} \right)^{\frac{1}{2}}
\left( \dfrac{T}{T - t} \right)^{\frac{\alpha}{2}} \left( \dfrac{T + \alpha t}{T - \alpha^2 t} \right)^{\frac{1}{2}},
\end{eqnarray}
if $t < \min\{T, T/\alpha^2\}$ and diverges otherwise,
where we have used the developments in section~\ref{hvsi} and the independence of the Brownian increments. It is clear that
this moment is well-defined for all $t \in [0,T)$ if and only if $\alpha \le 1$. This implies that risk-seeking power laws,
characterized by $\alpha >1$, and the exponential utility, make this market not viable. This of course does not mean that the
market is viable for $\alpha \le 1$, since we have not used the optimal portfolio for the corresponding power law utility, but the optimal
portfolio for the logarithmic utility, which is suboptimal in this case. Moreover, the case $\alpha=1$ is not viable in the classical
Merton problem. Although our present results suggest that risk aversion is not the best approach to model insider trading, it seems that
it is a necessary condition in order to keep the market viable, at least if the optimization of an utility is sought. Thus on one hand,
it is clear that there is room for some improvement as a risk-averse power law is still less conservative than the logarithmic utility.
On the other hand, it is perhaps more promising to try to classify the investment strategies pathwise according to some notion of stochastic order,
as done in~\cite{escudero2021optimal} in a much simplified situation, rather than to maximize the expectation of some utility; note that
a stochastic process at a fixed time is a random variable, and hence the necessity to introduce a stochastic order (see~\cite{escudero2021optimal}
for a discussion of several notions of stochastic order in a related problem).
This would have to be probably implemented together with a debt limit to the traders, in order to have a well-defined optimal portfolio
(such a limit would prevent the optimality of allocating an infinite amount of wealth in one of the investments). This is motivated by
the fact that creditors would be more risk-averse than the insider. So such an implementation requires a combination of mathematical
and financial considerations.
Finally, let us mention that the same result holds for
$\mathbb{E} \left[ \left( \left. M_t/M_0 \right|_{\text{Insider}}\right)^\alpha \, \right]$, as can be checked by means of a direct
calculation akin to the previous one after assuming the constancy of the model parameters.

\section*{Acknowledgments}

This work has been partially supported by the Government of Spain (Ministerio de Ciencia, Innovaci\'on y Universidades)
through Project PGC2018-097704-B-I00.


\bibliographystyle{abbrv}
\bibliography{Thesis_bib}


\vskip4mm
\noindent
{\footnotesize
Mauricio Elizalde\par\noindent
Departamento de Matem\'aticas\par\noindent
Universidad Aut\'onoma de Madrid\par\noindent
{\tt mauricio.elizalde@estudiante.uam.es}\par\vskip3mm\noindent
Carlos Escudero\par\noindent
Departamento de Matem\'aticas Fundamentales\par\noindent
Universidad Nacional de Educaci\'on a Distancia\par\noindent
{\tt cescudero@mat.uned.es}
}
\end{document}